\theoremstyle{plain}
\newtheorem{theorem}{Theorem}
\newtheorem*{conjecture}{Conjecture}
\newtheorem*{lemma}{Lemma}
\newtheorem{proposition}{Proposition}
\theoremstyle{remark}
\DeclareMathOperator{\csch}{csch}
\newcommand{\tre}{\text{Re}}
\newcommand{\tim}{\text{Im}}
\newcommand{\pp}{{\prime\prime}}
\theoremstyle{definition}
\newtheorem*{hypothesis}{Hypothesis D}
\begin{document}
\title{X-Ray of Zhang's eta function}
\author{Jeffrey Stopple}
\begin{abstract}
Study of the level curves $\tre(\eta(s))=0$ and $\tim(\eta(s))=0$, for $\eta(s)=\pi^{-s/2}\Gamma(s/2)\zeta^\prime(s)$ gives a new classification of the zeros of $\zeta(s)$ and of $\zeta^\prime(s)$.  Numerical evidence indicates that the statistics of the gaps (between zeros of $\zeta$), or distance from the critical line (for zeros of $\zeta^\prime$) is related to the classification.  Theorem 5 gives the full conjecture of Soundararajan for the zeros we classify as type 2.  We assume the Riemann Hypothesis throughout.
\end{abstract}
\email{stopple@math.ucsb.edu}
\keywords{Zeros of the Riemann zeta function, zeros of the derivative of the Riemann zeta function}
\subjclass[2010]{11M06}

\maketitle

 \baselineskip=16pt

\subsection*{Introduction}
In \cite{Zhang}, Zhang named the function
\[
\eta(s)=\pi^{-s/2}\Gamma(s/2)\zeta^\prime(s).
\]
This function has an interesting property with respect to the zeros of $\zeta(s)$ on the critical line:
\begin{lemma} (\cite[(1.6)]{Levinson} or  \cite[Lemma 1]{Zhang}) Suppose $t>7$.  Then  we have $\zeta(1/2+it)=0$  if and only if $\tre(\eta(1/2+i t))=0$.
\end{lemma}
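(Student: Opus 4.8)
The plan is to tie $\eta$ to the completed zeta function and exploit the latter's symmetry on the critical line. Write $G(s)=\pi^{-s/2}\Gamma(s/2)$, so that $\eta(s)=G(s)\zeta^\prime(s)$, and let $\xi(s)=G(s)\zeta(s)=\pi^{-s/2}\Gamma(s/2)\zeta(s)$ be the completed zeta function, which satisfies the functional equation $\xi(s)=\xi(1-s)$. Differentiating $\xi=G\zeta$ gives $\xi^\prime=G^\prime\zeta+G\zeta^\prime=G^\prime\zeta+\eta$, and hence
\[
\eta(s)=\xi^\prime(s)-\frac{G^\prime(s)}{G(s)}\,\xi(s).
\]
I would first record two reflection facts on the line $s=\tfrac12+it$. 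Since $\xi$ is real on the real axis we have $\overline{\xi(s)}=\xi(\bar s)$; because $\bar s=1-s$ on the critical line, the functional equation gives $\xi(\tfrac12+it)=\xi(\tfrac12-it)=\overline{\xi(\tfrac12+it)}$, so $\xi(\tfrac12+it)$ is real. Differentiating $\xi(s)=\xi(1-s)$ yields $\xi^\prime(s)=-\xi^\prime(1-s)$, and the same reflection argument then shows $\xi^\prime(\tfrac12+it)$ is purely imaginary.

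Taking real parts in the displayed identity and using that $\xi(\tfrac12+it)$ is real while $\xi^\prime(\tfrac12+it)$ is imaginary, I obtain
\[
\tre\bigl(\eta(\tfrac12+it)\bigr)=-\,\tre\!\left(\frac{G^\prime}{G}(\tfrac12+it)\right)\xi(\tfrac12+it).
\]
To identify the coefficient, write $G(\tfrac12+it)=g(t)e^{i\vartheta(t)}$ in polar form, with $g(t)=|G(\tfrac12+it)|>0$ and $\vartheta$ a branch of the Riemann--Siegel theta function. Differentiating $\log G(\tfrac12+it)$ in $t$ shows that the real part of $G^\prime/G$ along the line is exactly $\vartheta^\prime(t)$, so that $\tre(\eta(\tfrac12+it))=-\,\vartheta^\prime(t)\,\xi(\tfrac12+it)$. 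Since $g(t)>0$, the factor $G(\tfrac12+it)$ never vanishes, so $\xi(\tfrac12+it)=0$ if and only if $\zeta(\tfrac12+it)=0$.

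With this identity both implications follow once $\vartheta^\prime(t)\neq0$. If $\zeta(\tfrac12+it)=0$ then $\xi(\tfrac12+it)=0$ and the right-hand side vanishes, giving $\tre(\eta)=0$; this direction needs no restriction on $t$. Conversely, if $\tre(\eta(\tfrac12+it))=0$ and $\vartheta^\prime(t)>0$, then $\xi(\tfrac12+it)=0$, hence $\zeta(\tfrac12+it)=0$. The lemma therefore reduces to the positivity $\vartheta^\prime(t)>0$ for $t>7$, which is the only genuinely analytic point and the main obstacle. I would establish it from $\vartheta^\prime(t)=\tfrac12\,\tre\,\psi(\tfrac14+\tfrac{it}{2})-\tfrac12\log\pi$, where $\psi=\Gamma^\prime/\Gamma$: the $t$-derivative of $\tre\,\psi(\tfrac14+\tfrac{it}{2})$ equals $-\tfrac12\,\tim\,\psi^\prime(\tfrac14+\tfrac{it}{2})$, which is positive because every term of $\psi^\prime(w)=\sum_{n\ge0}(w+n)^{-2}$ has negative imaginary part when $\tim\,w>0$. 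Thus $\vartheta^\prime$ is strictly increasing, it has a single sign change, and a numerical check at $t=7$ (equivalently $\tre\,\psi(\tfrac14+\tfrac{7i}{2})>\log\pi$, confirmed by the asymptotic $\psi(w)=\log w-\tfrac1{2w}+O(|w|^{-2})$) shows that change occurs below $7$, so positivity persists for all $t>7$.
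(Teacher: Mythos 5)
Your proof is correct. The paper does not prove this lemma itself but cites Levinson's (1.6) and Zhang's Lemma 1, and your argument --- writing $\eta=\xi'-(G'/G)\,\xi$ with $\xi=G\zeta$ the completed zeta function, using the functional equation to make $\xi$ real and $\xi'$ purely imaginary on the critical line so that $\tre\bigl(\eta(\tfrac12+it)\bigr)=-\vartheta'(t)\,\xi(\tfrac12+it)$, then reducing everything to $\vartheta'(t)>0$ for $t>7$ (convexity of $\vartheta$ plus a value check at $t=7$, the hypothesis $t>7$ serving exactly to stay above the minimum of $\vartheta$ near $t\approx 6.29$) --- is essentially the classical argument from those sources.
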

The lemma makes the level curves for $\eta(s)$ of interest.  In \cite{Arias}, Arias-de-Reyna used the terminology \lq X-ray\rq\  for the level curves $\tre(\zeta(s))=0$ and $\tim(\zeta(s))=0$.  
Arias-de-Reyna used thick and thin lines for the level curves  $\tre(\zeta(s))=0$ and $\tim(\zeta(s))=0$ respectively.  We use color for the level curves of $\eta(s)$, and in addition we color separately based on the sign of the component which is not $0$.  Thus the colors in Figure \ref{F:1} can be interpreted as follows:

\begin{itemize}
\item[Red:] $\tim(\eta(s))=0$ and $\tre(\eta(s))>0$
\item[Green:] $\tre(\eta(s))=0$ and $\tim(\eta(s))>0$
\item[Cyan:] $\tim(\eta(s))=0$ and $\tre(\eta(s))<0$
\item[Purple:] $\tre(\eta(s))=0$ and $\tim(\eta(s))<0$
\end{itemize}

\begin{figure}
\begin{center}
\includegraphics[scale=1, viewport=0 0 350 300,clip]{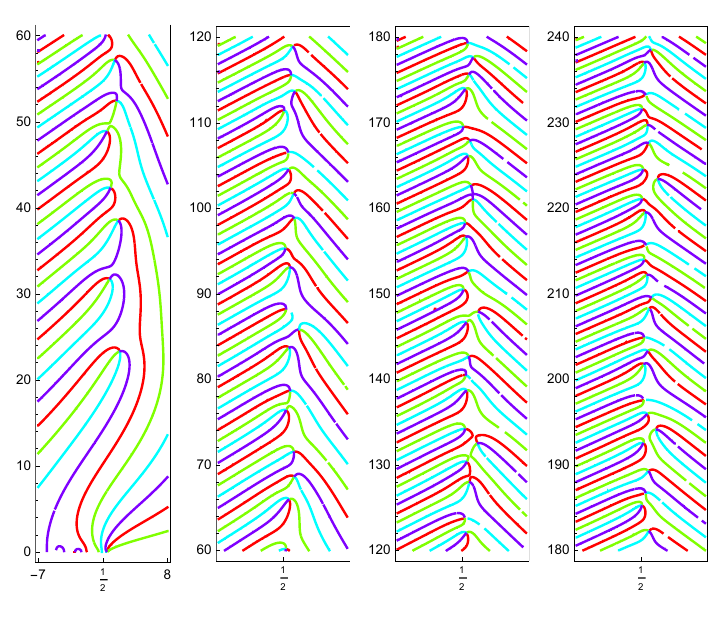}
\caption{Level curves for $\eta(s)$, $-7\le\sigma\le 8$, $0\le t\le 240$}\label{F:1}
\end{center}
\end{figure}

\noindent (In \emph{Mathematica} these colors are \textsc{Hue[0]}, \textsc{Hue[1/4]}, \textsc{Hue[1/2]}, and \textsc{Hue[3/4]} respectively.)

Throughout we assume the Riemann Hypothesis.  We also need to assume the following
\begin{hypothesis}   The level curves $\tre(\eta(s))=0$ are differentiable.  This is automatic except at isolated points where $\eta^\prime(s)=0$, so we are really assuming that when $\eta^\prime(s)=0$, $\arg(\eta(s))\ne\pm\pi/2$.  This prevents the level curves from branching.   Hypothesis D is  plausible because the two arguments $\pm\pi/2$ are measure zero on the unit circle, while the zeros of $\eta^\prime(s)$ form a countable set.
\end{hypothesis}

For shorthand when referring to \lq the zeros\rq\  of $\zeta(s)$ we mean the nontrivial zeros only.  The Riemann zeros $\rho=1/2+i\gamma$ of $\zeta(s)$ occur where the green and purple contours cross the critical line.  The zeros $\rho^\prime$ of $\zeta^\prime$ are visible everywhere four colors come together (exclusive of the double pole at $s=1$.)

\begin{figure}
\begin{center}
\includegraphics[scale=1, viewport=0 0 350 300,clip]{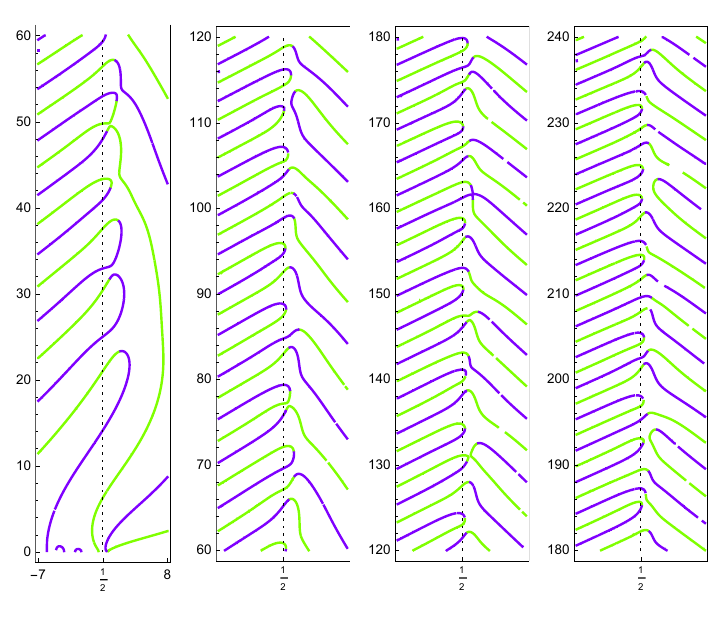}
\caption{Level curves for $\eta(s)$, $-7\le\sigma\le 8$, $0\le t\le 240$}\label{F:2}
\end{center}
\end{figure}

\noindent
Here's a summary of the sections of the paper:
\begin{enumerate}
\item[\S\ref{S:classify}] Classification the zeros of $\zeta(s)$ and $\zeta^\prime(s)$ into different types by means of the level curves.
\item[\S\ref{S:data}]   Investigation the asymptotics of the types, and examination of the data for approximately $10^5$ zeros of $\zeta^\prime$ near $T=10^6$.
\item[\S\ref{S:Spira}] A canonical bijection between the zeros $\rho^\prime$ of $\zeta^\prime(s)$ with $\tre(s)>1/2$, and the zeros $\rho^\pp$ of $\zeta^{\prime\prime}(s)$ with $\tre(s)>1/2$.
\item[\S\ref{S:Marden}]  Adaptation of a theorem of Marden.  The location of $\rho^\pp$ relative to $\rho^\prime$.
\item[\S\ref{S:application}] Application of the classification: the conjecture of Soundararajan for type 2 zeros.
\item[\S\ref{S:IZL}] Appendix I: The \lq Improved Zhang-Ge Lemma\rq, a study of $h^\prime/h+\zeta^{\prime\prime}/\zeta^\prime$ explicit enough to remove the constraint \lq for all large  $\gamma$\rq.
\item[\S \ref{S:numerics}] Appendix II: Description of the algorithm used to obtain the data.
\end{enumerate}

\section{Classification of zeros}\label{S:classify}

\begin{theorem} With the usual indexing $\gamma_1<\gamma_2<\ldots$ of the imaginary parts of the zeros of $\zeta(s)$, every odd indexed zero lies on a contour $\tim(\eta(s))<0$.  Every even indexed zero lies on a contour $\tim(\eta(s))>0$. 
\end{theorem}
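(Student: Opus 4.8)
The plan is to reduce the statement to the real-valued Riemann--Siegel $Z$-function and read off the signs from its derivative. Writing $s=1/2+it$ and factoring the archimedean part, I would set $R(t)=\pi^{-1/4}\lvert\Gamma(1/4+it/2)\rvert>0$ and let $\theta(t)$ denote the Riemann--Siegel theta function, so that $\pi^{-s/2}\Gamma(s/2)=R(t)e^{i\theta(t)}$ on the line, and recall that $Z(t)=e^{i\theta(t)}\zeta(1/2+it)$ is real for real $t$. The first step is to differentiate $\zeta(1/2+it)=e^{-i\theta(t)}Z(t)$ in $t$; since $\frac{d}{dt}\zeta(1/2+it)=i\,\zeta'(1/2+it)$, this yields
\[
e^{i\theta(t)}\zeta'(1/2+it)=-i\,Z'(t)-\theta'(t)\,Z(t).
\]
Multiplying by $R(t)$ gives $\eta(1/2+it)=R(t)\bigl(-i\,Z'(t)-\theta'(t)Z(t)\bigr)$, and because $R$, $\theta'$, $Z$, $Z'$ are real (with $\theta'>0$ for $t>7$) I can separate the parts as
\[
\tre\bigl(\eta(1/2+it)\bigr)=-R(t)\theta'(t)Z(t),\qquad \tim\bigl(\eta(1/2+it)\bigr)=-R(t)Z'(t).
\]
The first identity encapsulates the Lemma, since $R\theta'>0$ for $t>7$ forces $\tre(\eta)=0\iff Z=0\iff\zeta(1/2+it)=0$; the second is the new ingredient I need.

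Second, at a zero $\gamma_n$ one has $Z(\gamma_n)=0$, so the splitting gives $\tim(\eta(1/2+i\gamma_n))=-R(\gamma_n)Z'(\gamma_n)$, and since $R(\gamma_n)>0$ the sign of $\tim(\eta)$ at the zero is opposite to the sign of $Z'(\gamma_n)$. Thus the theorem reduces to tracking the sign of $Z'$ at consecutive zeros.

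Third, I would pin down those signs from the global sign pattern of the real function $Z$. The anchor is $Z(0)=\zeta(1/2)<0$ (using $\theta(0)=0$), together with the fact that $\gamma_1\approx 14.13$ is the first zero, so $Z<0$ throughout $[0,\gamma_1)$; in particular every $\gamma_n>7$, so the decomposition above is valid at all zeros. If the zeros are simple, $Z$ changes sign at each $\gamma_n$, so the sign of $Z$ on $(\gamma_n,\gamma_{n+1})$ is $(-1)^{n+1}$. Crossing $\gamma_n$ upward, $Z$ passes from sign $(-1)^n$ to sign $(-1)^{n+1}$, whence $Z'(\gamma_n)>0$ for odd $n$ and $Z'(\gamma_n)<0$ for even $n$. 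Combined with the second step, $\tim(\eta(1/2+i\gamma_n))<0$ for odd $n$ and $>0$ for even $n$, which is the claim.

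The main obstacle is the assertion that $Z$ genuinely changes sign at each zero, i.e.\ that each zero has odd order. At a multiple zero one would have $Z'(\gamma_n)=0$, hence $\tim(\eta)=0$, so $\gamma_n$ would lie on neither colored contour and the statement would be ill-posed there; the clean formulation presupposes simple zeros, equivalently $\zeta'(1/2+i\gamma_n)\neq 0$, which under the Riemann Hypothesis is the expected situation. I should also justify $\theta'(t)>0$ and the validity of the splitting across the whole relevant range, which is exactly where the explicit versions of Zhang's Lemmas 2 and 4 enter; but since the smallest ordinate already exceeds $7$, the Lemma's threshold covers every zero and no separate small-$t$ analysis is needed.
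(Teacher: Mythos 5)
Your argument is correct, and it takes a genuinely different route from the paper's. The paper derives the theorem from its two lemmas: the Improved Zhang Lemma ($F(t)>0$ for $t>7$, so $\arg\eta(1/2+it)$ is strictly decreasing) together with the improved Lemma 4 ($\int_{\gamma_n}^{\gamma_{n+1}}F(t)\,dt=\pi$), so the purely imaginary values $\eta(1/2+i\gamma_n)$ must alternate between the two halves of the imaginary axis, with the parity fixed by a \emph{Mathematica} evaluation at $\gamma_1$. You instead use the Riemann--Siegel decomposition: the identity $\eta(1/2+it)=R(t)\bigl(-iZ'(t)-\theta'(t)Z(t)\bigr)$ splits into $\tre(\eta)=-R\theta'Z$ and $\tim(\eta)=-RZ'$, so the sign of $\tim(\eta)$ at $\gamma_n$ is opposite to that of $Z'(\gamma_n)$, which alternates by elementary sign-counting of the real function $Z$ anchored at $Z(0)=\zeta(1/2)<0$. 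Your route buys a lot: it bypasses the Improved Zhang Lemma entirely (the technical heart of the paper, requiring explicit estimates over the zeros of $\zeta'$), it replaces the numerical anchor at $\gamma_1$ with the closed-form fact $\zeta(1/2)<0$, and it yields Levinson's lemma as a byproduct of $\tre(\eta)=-R\theta'Z$ with $\theta'>0$ (note that for the theorem itself you only need the implication $Z(\gamma_n)=0\Rightarrow\tre(\eta)=0$, so the positivity of $\theta'$ is not actually load-bearing in your argument). What the paper's heavier route buys is reuse: the monotonicity of $\arg\eta$ on the critical line is needed anyway for the classification theorem that follows (to rule out contours looping back to the line), so this theorem comes there at no extra cost.

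Your caveat about simple zeros is legitimate but does not put you at a disadvantage relative to the paper: at a multiple zero of $\zeta$ one has $\eta(1/2+i\gamma_n)=0$, so the zero lies on no colored contour and the statement is ill-posed for either proof. The paper makes the same assumption implicitly when it asserts that RH gives $\beta'>1/2$ strictly; indeed a zero of $\zeta'$ on the critical line (with $t>7$) forces a zero of $\zeta$ there --- as your own identity $\tre(\eta)=-R\theta'Z$ shows --- so excluding $\beta'=1/2$ is exactly the assumption that all zeros are simple.
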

\begin{proof}
This follows from the Improved Zhang-Ge Lemma below, which says that as $t$ increases, the argument of $\eta(1/2+it)$ decreases by exactly $\pi$ between consecutive zeros.  A \emph{Mathematica} calculation of $\eta(1/2+i\gamma_1)$ determines the parity of all the zeros.
\end{proof}

\subsubsection*{Zeros of $\zeta^\prime(s)$}
\begin{itemize}
\item[Type 0:] We will say a zero $\rho^\prime$ of $\zeta^\prime(s)$ is of \textsc{type 0} if \emph{neither} of the level curves $\tre(\eta(s))=0, \tim(\eta(s))>0$ and $\tre(\eta(s))=0, \tim(\eta(s))<0$ exiting $\rho^\prime$ cross the critical line $\sigma=1/2$.
\item[Type 1:] We will say a zero $\rho^\prime$ of $\zeta^\prime(s)$ is of \textsc{type 1} if \emph{exactly one} of the level curves $\tre(\eta(s))=0, \tim(\eta(s))>0$ and $\tre(\eta(s))=0, \tim(\eta(s))<0$ exiting $\rho^\prime$ crosses the critical line $\sigma=1/2$.
\item[Type 2:] We will say a zero $\rho^\prime$ of $\zeta^\prime(s)$ is of \textsc{type 2} if the level curves $\tre(\eta(s))=0, \tim(\eta(s))>0$ and $\tre(\eta(s))=0, \tim(\eta(s))<0$ exiting $\rho^\prime$ \emph{both} cross the critical line $\sigma=1/2$.
\end{itemize}
(To be completely precise, \lq crosses the critical line\rq\ above should really be replaced with \lq crosses the critical line above $t=7$\rq, since there is a curve originating in the double pole at $s=1$, which crosses the critical line below $t=7$ but does not correspond to a zero of $\zeta(s)$.  The Lemmas do not apply in this region.)\ \   These zeros could be further classified according to what the other two contours are doing, but I don't (yet) see the utility.

\subsubsection*{Zeros of $\zeta(s)$}
\begin{itemize}
\item[Type 1:] We will say a zero $\rho=1/2+i\gamma$ of $\zeta(s)$ is of \textsc{type 1} if the level curve $\tre(\eta(s))=0$ on which it lies, terminates in a zero $\rho^\prime$ which is of type 1.
\item[Type 2:] We will say a zero $\rho=1/2+i\gamma$ of $\zeta(s)$ is of \textsc{type 2} if the level curve $\tre(\eta(s))=0$ on which it lies, terminates in a zero $\rho^\prime$ which is of type 2.
\end{itemize}

Figure \ref{F:2} is Figure \ref{F:1} with the $\tim(\eta(s))=0$ curves removed, to see more easily the zeros of $\zeta(s)$ (curve crosses the critical line) and $\zeta^\prime(s)$ (curves of different colors meet) and their types.  When both branches form a loop to the left, it is type 2.  When they loop to the right, it is type 0.  If the two colors extend in opposite directions without looping, it is type 1.  In Figure \ref{F:2}, the first four zeros of $\zeta^\prime(s)$ have type 2; the next four alternate between types 1 and 2.  The first zero of type 0 occurs at height about 113, with another at height about 132.  At height about 161 we have two consecutive zeros of type 1, but from the way the graphics are imported into Latex one can not tell, looks like it might be a type 2 and type 0.  It seems a zero of $\eta^\prime(s)$ is nearby.  The breaks in the curves are an artifact of the \emph{Mathematica} \textsc{ContourPlot} command; they could be eliminated by setting the parameters to sample more points.

\begin{theorem}\label{Thm:classification}
Every Riemann zero is of either type 1 or type 2.  Thus we have a canonical mapping from the zeros of $\zeta(s)$ to those of $\zeta^\prime(s)$, which is two to one on the type 2 zeros, and one to one on the type 1 zeros.  Zeros of $\zeta^\prime(s)$ of type 0 are precisely those not in the image of this mapping.  The Riemann zeros of type 2 are canonically grouped in pairs.
\end{theorem}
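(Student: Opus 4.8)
The plan is to read all four assertions off the global geometry of the level-curve system $\{\tre(\eta(s))=0\}$, viewed as a graph whose vertices are the zeros $\rho'$ of $\zeta'(s)$ and whose edges are the green and purple arcs. First I would record the local and global structure. Away from the zeros of $\zeta'$ the set $\tre(\eta)=0$ is a disjoint union of smooth arcs, and since $\tim(\eta)$ can change sign only where $\tim(\eta)=0$ meets $\tre(\eta)=0$, i.e. only at a zero of $\eta$ (equivalently of $\zeta'$), each arc is entirely green ($\tim(\eta)>0$) or entirely purple ($\tim(\eta)<0$). Writing $\eta(s)\approx c(s-\rho')$ near a simple zero $\rho'$ shows that exactly one green and one purple ray emanate from each $\rho'$ (together with one red and one cyan), so each $\rho'$ is an endpoint of exactly one green and one purple arc.

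Next I would pin down the crossings with the critical line. By the Lemma a green or purple arc meets $\sigma=1/2$ (for $t>7$) exactly at the Riemann zeros, and Theorem~1 supplies the colour: green arcs cross at the even-indexed zeros and purple arcs at the odd-indexed zeros. Each $\rho_n$ is an interior, regular point of a single arc, at which that arc crosses $\sigma=1/2$ transversally, because $\tre(\eta(1/2+it))$ changes sign at $\gamma_n$ as the argument of $\eta(1/2+it)$ decreases monotonically through $\pm\pi/2$ by the Improved Zhang Lemma. The heart of the proof is then to show that the arc through $\rho_n$, followed into the half-plane $\sigma>1/2$, terminates at a zero $\rho'$ of $\zeta'(s)$ rather than escaping to infinity. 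Here I would use Speiser's theorem (on RH all zeros of $\zeta'$ have $\sigma\ge 1/2$), so that the branch of the arc running into $\sigma<1/2$ meets no zero of $\zeta'$ and hence the relevant endpoint must be reached on the right, together with the explicit form of Zhang's Lemmas~2 and~4 to bound $\arg\eta$ near the critical line and confine the arc until it reaches such a $\rho'$.

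Once the termination is in hand the remaining assertions are formal. Since the arc through $\rho_n$ does cross $\sigma=1/2$, at least one ray of its terminal $\rho'$ crosses the critical line, so $\rho'$ is of type~1 or type~2 and never type~0; this is exactly the statement that every Riemann zero is of type~1 or~2, and that type-0 zeros of $\zeta'$ lie outside the image of $\rho_n\mapsto\rho'$. A type-1 $\rho'$ has precisely one ray (green or purple) crossing $\sigma=1/2$, hence is the image of exactly one $\rho_n$, giving the one-to-one part; a type-2 $\rho'$ has both rays crossing, and by the colour--parity dictionary its green ray crosses at an even zero and its purple ray at an odd zero, so exactly two Riemann zeros---one even, one odd---map to it. These two are the canonical pair, which yields both the two-to-one statement and the pairing of type-2 Riemann zeros.

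As a global consistency check I would compare the counting functions: with $N(T)$ the number of Riemann zeros and $N_1(T)$ the number of zeros of $\zeta'$ up to height $T$, one has $N(T)-N_1(T)\sim \tfrac{T}{2\pi}\log 2$, which by the fibre structure equals $(\#\,\text{type 2})-(\#\,\text{type 0})$ up to height $T$, matching the claimed multiplicities. I expect the confinement argument---ruling out escape to infinity on the right and forcing each arc to terminate at a zero of $\zeta'$---to be the main obstacle, and the one place where the explicit constants of the Improved Zhang Lemma and Speiser's theorem are genuinely needed; the fibre count and the pairing are then bookkeeping over the three types.
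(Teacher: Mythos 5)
Your bookkeeping (the arc--vertex graph structure, the fibre counts, the pairing of type 2 zeros) is fine, and it corresponds to what the paper dismisses in one line as ``all this is clear.'' The problem is the step you yourself flag as the main obstacle: you never actually give the confinement argument, and the tools you name cannot supply it. Zhang's Lemmas 2 and 4, and the paper's Improved Zhang Lemma, control $\arg\eta$ only \emph{on} the critical line; they say nothing about a contour once it leaves $\sigma=1/2$, so they cannot ``confine the arc'' in the right half-plane or prevent it from escaping to infinity there. Likewise Speiser's theorem (no zeros of $\zeta'$ with $\beta'<1/2$ under RH) only tells you that the left-going branch cannot terminate; it does not manufacture a terminal point on the right. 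The alternative that has to be excluded is precisely that the arc has \emph{no} endpoint at all --- it wanders off to infinity in $\sigma>1/2$ --- and nothing in your proposal addresses that.

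The paper's mechanism for exactly this step is the ingredient your proposal is missing, and it lives far from the critical line: the contour on which a Riemann zero sits is one member of a continuous family of constant-argument contours of $\eta$, and $\arg(\eta(s))$ is monotonically \emph{increasing} in $t$ for $\tre(s)>3$ but \emph{decreasing} for $\tre(s)<0$. A contour that continued from the left regime across to the right regime (or escaped upward on the right) would have to cross over other members of this family, and contours of distinct constant argument can only meet where $\arg\eta$ is undefined, i.e., at a zero of $\zeta'$; this is what forces termination at some $\rho'$. The remaining alternative --- looping back and crossing the critical line a second time --- is the part the Improved Zhang Lemma genuinely handles, via global monotonicity of $\arg\eta(1/2+it)$; this is your transversality observation used globally rather than locally. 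A smaller issue: your claim that exactly one green and one purple ray exit each $\rho'$ presumes the zeros of $\zeta'$ are simple, which is not known even under RH (the paper glosses over this as well). So: right skeleton, crux correctly identified, but the crux itself is left unproved, and the lemmas you cite are structurally incapable of proving it.
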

\begin{proof}
All this is clear except the first statement, which says the contours which cross the critical line from the left must terminate in exactly one zero of $\zeta^\prime(s)$.  Since we are assuming Hypothesis D, the alternatives we must rule out is continuation of the contour on to the right, or looping back to the left.  

For the first possibility, note that the contour $\arg(\eta(s))=\pi/2$ (resp. $\arg(\eta(s))=-\pi/2$) does not exist in isolation; it is part of a continuum which deform smoothly as the argument is varied.  But the argument of $\eta(s)$ is increasing (as one moves up vertically in the plane) for $\tre(s)>3$, but decreasing for $\tre(s)<0$.  They can only cross over each other where the argument of $\eta(s)$ is undefined, at a zero $\rho^\prime$.

The second possibility is ruled out by the Improved Zhang Lemma, which says that the argument of $\eta(s)$ decreases monotonically as one moves up the critical line.
\end{proof}

\section{Asymptotics and data}\label{S:data}
Let
\[
N_1(T)=\sharp\left\{\text{type 1 zeros }1/2+i\gamma\,|\,0<\gamma<T\right\}.
\]
NB: This is a nontraditional notation for the meaning of $N_1(T)$.
Let
\[
N_2(T)=
\sharp\left\{\text{pairs of type 2 zeros }1/2+i\gamma^-,1/2+i\gamma^+\,|\,0<\gamma^+<T\right\}.
\]
We have classically 
\begin{equation}\label{Eq:1}
N_1(T)+2N_2(T)=\frac{T}{2\pi}\log\left(\frac{T}{2\pi}\right)-\frac{T}{2\pi}+O\left(\log T\right).
\end{equation}
For $j=0,1,2$, let
\[
N_j^\prime(T)=\sharp\left\{\text{zeros }\rho^\prime=\beta^\prime+i\gamma^\prime\text{ of type }j\,|\,0<\gamma^\prime<T\right\}.
\]
NB: The ${}^\prime$ here does not indicate a derivative with respect to $T$.
The Theorem above implies $N_1(T)=N_1^\prime(T)$ and $N_2(T)=N_2^\prime(T)$.
Thus we have from \cite{Ber}:
\begin{equation}\label{Eq:2}
N_0^\prime(T)+N_1(T)+N_2(T)=\frac{T}{2\pi}\log\left(\frac{T}{4\pi}\right)-\frac{T}{2\pi}+O\left(\log T\right).
\end{equation}
Subtracting (\ref{Eq:2}) from  (\ref{Eq:1}) gives
\begin{equation}\label{Eq:3}
N_2(T)-N_0^\prime(T)=\frac{T}{2\pi}\log\left(2\right)+O\left(\log T\right).
\end{equation}
Subtracting (\ref{Eq:1}) from twice (\ref{Eq:2}) gives
\begin{equation}\label{Eq:4}
N_1(T)+2N_0^\prime(T)=\frac{T}{2\pi}\log\left(\frac{T}{8\pi}\right)-\frac{T}{2\pi}+O\left(\log T\right).
\end{equation}
These two estimates immediately give the following
\begin{theorem}   There are infinitely many type 2 zeros of $\zeta(s)$, and thus also of $\zeta^\prime(s)$.  At least one of the other types of zeros of $\zeta^\prime(s)$ is infinite in number.
\end{theorem}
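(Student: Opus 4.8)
The plan is to read both assertions off directly from the two derived asymptotics (\ref{Eq:3}) and (\ref{Eq:4}), using only the nonnegativity of each counting function together with the identifications $N_1(T)=N_1^\prime(T)$ and $N_2(T)=N_2^\prime(T)$ supplied by the previous theorem.

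First I would dispatch the infinitude of type 2 zeros. Rearranging (\ref{Eq:3}) gives
\[
N_2(T)=N_0^\prime(T)+\frac{T}{2\pi}\log(2)+O(\log T),
\]
and since $N_0^\prime(T)\ge 0$ this yields the lower bound $N_2(T)\ge \frac{T}{2\pi}\log(2)+O(\log T)$. The linear-in-$T$ main term dominates the logarithmic error, so the right-hand side tends to $+\infty$; hence $N_2(T)$ is unbounded and there are infinitely many pairs of type 2 Riemann zeros. Because the canonical map is two-to-one on the type 2 zeros and $N_2^\prime(T)=N_2(T)$, this simultaneously produces infinitely many type 2 zeros of $\zeta^\prime(s)$.

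For the second statement, the remaining types of zeros of $\zeta^\prime(s)$ are type 0 and type 1, counted by $N_0^\prime(T)$ and by $N_1^\prime(T)=N_1(T)$. I would argue by contradiction: were both finite in number, we would have $N_0^\prime(T)=O(1)$ and $N_1(T)=O(1)$, so the left-hand side $N_1(T)+2N_0^\prime(T)$ of (\ref{Eq:4}) would remain bounded. But its right-hand side $\frac{T}{2\pi}\log\left(\frac{T}{8\pi}\right)-\frac{T}{2\pi}+O(\log T)$ diverges to $+\infty$, a contradiction. Hence at least one of $N_0^\prime(T)$, $N_1^\prime(T)$ is unbounded, which is the claim.

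I do not anticipate any real obstacle here, since the whole argument is a soft consequence of the counting asymptotics and the nonnegativity of the counting functions, with no further analytic input required. The only place demanding care is bookkeeping: keeping straight that $N_2(T)$ counts \emph{pairs} (so that infinitude of pairs still entails infinitely many individual zeros), and that the identifications $N_1=N_1^\prime$ and $N_2=N_2^\prime$ are precisely what let the conclusions about $\zeta^\prime(s)$ be phrased in terms of the quantities actually appearing in (\ref{Eq:3}) and (\ref{Eq:4}).
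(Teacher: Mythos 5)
Your proposal is correct and is essentially the paper's own argument: the paper states this theorem as an immediate consequence (``In particular we have'') of equations (\ref{Eq:3}) and (\ref{Eq:4}), reading off the infinitude of type 2 zeros from the nonnegativity of $N_0^\prime(T)$ in (\ref{Eq:3}) and the unboundedness of $N_1(T)+2N_0^\prime(T)$ from (\ref{Eq:4}), exactly as you do. Your extra care about $N_2(T)$ counting pairs and about the identifications $N_1=N_1^\prime$, $N_2=N_2^\prime$ is consistent with the paper's setup and introduces no divergence in method.
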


Up to $T=1000$ it is possible to plot the level curves and classify the 537 zeros of $\zeta^\prime$ by hand: there are 75 of type 0 (14\%), 280 of type 1 (52\%), and 182 of type 2 (34\%).  
An algorithm was developed (see \S \ref{S:numerics}) to classify the $108043$ zeros of $\zeta^\prime$ near $T=10^6$, previously computed by Farmer for use in \cite{Duenez}.  One finds 23902 zeros of type 0 (22.1\%), 53621 of type 1 (49.6\%) and 30520 of type 2 (28.2\%).

We make the following conjecture
\begin{conjecture}  For some constant $C$, possibly equal $0$
\begin{align*}
N_0^\prime(T)=&\frac{1}{8\pi} T\log\left(\frac{T}{4\pi}\right)+\left(C-\frac{\log 2}{4\pi} \right)\cdot T+O(\log T)\\
N_1(T)=&\frac{1}{4\pi} T\log\left(\frac{T}{4\pi}\right)-\left(2C+\frac{1}{2\pi}\right)\cdot T+O(\log T)\\
N_2(T)=&\frac{1}{8\pi}T\log\left(\frac{T}{4\pi}\right)+\left(C+\frac{\log 2}{4\pi} \right)\cdot T+O(\log T).
\end{align*}
\end{conjecture}
The coefficients of the $T\log(T)$ terms in the conjecture are based on the heuristic that each of the two contours $\tre(\eta(s))=0,$ $\tim(\eta(s))>0$ and $\tre(\eta(s))=0,$  $\tim(\eta(s))<0$ emanating from a zero $\rho^\prime$ of $\zeta^\prime$ has equal chance of exiting the critical strip to the left or to the right.  And this is in rough agreement with the numerical evidence.  The lower order terms are the simplest expression in agreement with (\ref{Eq:2}), (\ref{Eq:3}) and (\ref{Eq:4}). 

\section{Zeros of $\zeta^{\prime\prime}(s)$}\label{S:Spira}

Spira, in \cite{Spira} was the first to observe that zeros of successive higher order derivatives of the Riemann zeta function seem to cluster along roughly horizontal lines.  He wrote \lq\lq The zeros of $\zeta^{\prime\prime}$ have imaginary part almost exactly equal to those of $\zeta^\prime$, and lie to the right of them.\rq\rq\ \   (See Figure 1 from his paper, or Figure \ref{F:moanswer} below.)\ \   The following explains Spira's observation.

\begin{figure}
\begin{center}
\includegraphics[scale=1.7, viewport=0 0 150 290,clip]{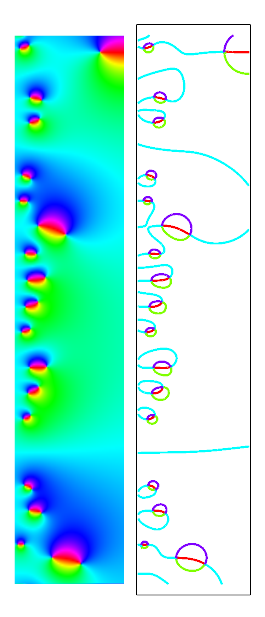}
\caption{$\arg(\zeta^{\prime\prime}(s)/\zeta^\prime(s))$, for $1/2\le\sigma\le 5/2$ and $10^6\le t\le 10^6+10$}\label{F:moanswer}
\end{center}
\end{figure}

\begin{theorem}\label{Thm4}
The level curves $\arg(\zeta^{\prime\prime}/\zeta^\prime(s))=0$ connect each zero  of $\zeta^\prime(s)$ with $\tre(s)>1/2$ to a zero of $\zeta^{\prime\prime}(s)$ with $\tre(s)>1/2$ (typically to the right), giving a canonical bijection between these two sets.  The same holds for zeros higher derivatives $\zeta^{(k)}(s)$ and $\zeta^{(k+1)}(s)$.
\end{theorem}
\begin{proof}
As $\sigma\to +\infty$, $\zeta^{\prime\prime}/\zeta^\prime(s)\to -\log(2)$.  Meanwhile, on the critical line, $\zeta^{\prime\prime}/\zeta^\prime(s)$ will be dominated by terms in the sum $\sum_{\rho^\prime} 1/(s-\rho^\prime)$ with $\rho^\prime$ near $s$, and this implies the real part of $\zeta^{\prime\prime}/\zeta^\prime(s)$ will again be negative.

Now fix a zero $\rho^\prime$ of $\zeta^\prime(s)$, and consider the level curves with 
$\arg=0$ exiting the pole at $\rho^\prime$.   By the above observations, this contour can't cross the critical line, nor extend too far into the right half plane.  The only possibility is that it terminates.  To finish the argument we must be be sure that a contour 
$\arg=0$ can not connect a zero of $\zeta^{\prime\prime}(s)$ to another zero nor a pole (i.e. zero of $\zeta^\prime(s)$) to another pole.  But this contour is the inverse image under $\zeta^\pp/\zeta^\prime$ of the positive real axis, connecting $0$ to $\infty$ on the Riemann sphere.  It can only connect zeros to poles.

Since
$$
\frac{\zeta^{\prime\prime}}{\zeta^\prime}(s)=\frac{1}{s-\rho^\prime}+O(1),
$$
the contour $\arg=0$ has to exit the pole to the right, and so the zeros of $\zeta^{\prime\prime}$ will be to the right of the zeros of $\zeta^\prime$.  Similarly, a contour with $\arg(\zeta^{\prime\prime}/\zeta^\prime(s))=0$ terminating in a zero of $\zeta^{\prime\prime}(s)$, when followed backwards, must originate in a pole, i.e., a zero of $\zeta^\prime(s)$.

This same argument works for higher derivatives as well.
\end{proof}

Figure \ref{F:moanswer} shows, for $1/2\le\sigma\le 5/2$ and $10^6\le t\le 10^6+10$, the argument of $\zeta^{\prime\prime}/\zeta^\prime(s)$ on the left, and only those curves corresponding to the four coordinate axes on the right.  The proof references the red curves.

\section{Adaptation of a Theorem of Marden}\label{S:Marden}

This section is inspired by the results in \cite{Marden1, Marden2}, which express the logarithmic derivative of an entire function $f$  as a sum over poles (zeros of $f$) weighted by rational expressions in a fixed set of zeros of $f$ and $f^\prime$.   Marden's proof of \cite[Theorem 2.1]{Marden1} via the Cauchy Integral Formula can be generalized to $f(s)=\zeta^\prime(s)$, but in fact  \cite[Theorem 2.1]{Marden1}  and \cite[Theorem 2.1]{Marden2} can be proved simply by a partial fraction decomposition and taking linear combinations of the  Hadamard logarithmic derivative, as shall see.

Following the notation of  \cite{Yildirim1}, we denote the real zeros of $\zeta^\prime(s)$ as $-a_n$, with $a_n\in (2n,2n+2)$.  In fact, \cite[Lemma 1]{Yildirim2},
\[
-a_n=-2n-2+\frac{1}{\log(n)}+O\left(\frac{1}{\log^2(n)}\right).
\]
A generic non-real zero of $\zeta^\prime(s)$ will be denoted as $\lambda^\prime$ in this subsection.  The following proposition will be useful in studying applications of type 2 zeros in the next section.

\begin{proposition}\label{P:Marden}
Fix a complex zero $\rho^\pp=\beta^\pp+i\gamma^\pp$ of $\zeta^\pp(s)$ with $\beta^\pp>1/2$.  With $s=\sigma+it$ in a vertical strip $a\le\sigma\le b$ we have
\begin{equation}\label{Eq:Marden}
\frac{\zeta^\pp}{\zeta^\prime}(s)=\frac{\log(\gamma^\pp/t)}{2}
+
\sum_{\lambda^\prime}\frac{\rho^\pp-s}{(s-\lambda^\prime)(\rho^\pp-\lambda^\prime)}
+O\left(\frac{1}{t}\right)+O\left(\frac{1}{\gamma^\pp}\right).
\end{equation}
The sum is uniformly convergent on compact sets.
\end{proposition}
\begin{proof}
The starting point is the partial fraction representation
\begin{multline}\label{Eq:new2}
\frac{\zeta^\pp}{\zeta^\prime}(s)=\frac{\zeta^\pp}{\zeta^\prime}(0)-2-\frac{2}{s-1}+\\
\sum_n\left(\frac{1}{s+a_n}-\frac{1}{a_n}\right)+\sum_{\lambda^\prime}\left(\frac{1}{s-\lambda^\prime}+\frac{1}{\lambda^\prime}\right)
\end{multline}
which follows from the Hadamard theory.
From \eqref{Eq:new2} subtract $0=\zeta^\pp/\zeta^\prime(\rho^\pp)$ to obtain
\begin{multline*}
\frac{\zeta^\pp}{\zeta^\prime}(s)=-\frac{2}{s-1}+\frac{2}{\rho^\pp-1}\\
+\sum_n\left(\frac{1}{s+a_n}-\frac{1}{\rho^\pp+a_n}\right)+\sum_{\lambda^\prime}\frac{\rho^\pp-s}{(s-\lambda^\prime)(\rho^\pp-\lambda^\prime)}.
\end{multline*}
Add and subtract $\psi(s/2)/2$ where the digamma function $\psi(s)=\Gamma^\prime/\Gamma(s)$.  From the series representation for the digamma function we see that
\begin{multline*}
\frac{\zeta^\pp}{\zeta^\prime}(s)=-\frac{1}{2}\psi(s/2)-\frac{C}{2}+\frac{1}{s}-\frac{2}{s-1}+\frac{2}{\rho^\pp-1}\\
+\sum_n\left(\frac{1}{2n}-\frac{1}{s+2n}+\frac{1}{s+a_n}-\frac{1}{\rho^\pp+a_n}\right)+\sum_{\lambda^\prime}\frac{\rho^\pp-s}{(s-\lambda^\prime)(\rho^\pp-\lambda^\prime)}.
\end{multline*}
We regroup the terms to obtain
\begin{multline*}
\frac{\zeta^\pp}{\zeta^\prime}(s)=-\frac{1}{2}\psi(s/2)-\frac{C}{2}+\frac{1}{s}-\frac{2}{s-1}+\frac{2}{\rho^\pp-1}+\\
\sum_n\frac{2n-a_n}{(s+2n)(s+a_n)}+
\sum_n\left(\frac{1}{2n}-\frac{1}{\rho^\pp+a_n}\right)+
\sum_{\lambda^\prime}\frac{\rho^\pp-s}{(s-\lambda^\prime)(\rho^\pp-\lambda^\prime)}.
\end{multline*}
\begin{lemma}
We have
\[
\frac{1}{s}-\frac{2}{s-1}+\sum_n\frac{2n-a_n}{(s+2n)(s+a_n)}=O\left(\frac{1}{t}\right)
\]
\end{lemma}
\begin{proof}
The numerator $2n-a_n$ of the summand is $O(1)$, while
\begin{multline*}
\sum_n\frac{1}{(s+2n)(s+a_n)}\ll |s|^2\sum_n\frac{1}{(4n^2+t^2)^2}+\\
|s|\sum_n\frac{4n}{(4n^2+t^2)^2}+\sum_n\frac{4n^2}{(4n^2+t^2)^2}.
\end{multline*}
The first and last sum on the right have complicated closed forms in terms of $\sinh$, $\cosh$, $\coth$, and $\csch$, while the middle sum is expressed in terms of $\psi^\prime$, the derivative of the digamma function.  Including the leading factors $|s|^2$, $|s|$ and $1$, each is $O(1/t)$.
\end{proof}
We add
\[
0=\frac{1}{2}\psi(\rho^\pp/2)+\frac{C}{2}-\frac{1}{\rho^\pp}+\sum_n\left(\frac{1}{\rho^\pp+2n}-\frac{1}{2n}\right)
\]
to see that
\begin{multline*}
\frac{\zeta^\pp}{\zeta^\prime}(s)=\frac{\psi(\rho^\pp/2)-\psi(s/2)}{2}
+\sum_{\lambda^\prime}\frac{\rho^\pp-s}{(s-\lambda^\prime)(\rho^\pp-\lambda^\prime)}
\\
-\frac{1}{\rho^\pp}+\frac{2}{\rho^\pp-1}+\sum_n\left(\frac{1}{\rho^\pp+2n}-\frac{1}{\rho^\pp+a_n}\right)+O\left(\frac{1}{t}\right).
\end{multline*}
Via the lemma,
\[
-\frac{1}{\rho^\pp}+\frac{2}{\rho^\pp-1}
+
\sum_n\left(\frac{1}{\rho^\pp+2n}-\frac{1}{\rho^\pp+a_n}\right)=O\left(\frac{1}{\gamma^\pp}\right).
\]
Stirling's formula gives
\[
\frac{\psi(\rho^\pp/2)-\psi(s/2)}{2}=\frac{1}{2}\log(\gamma^\pp/t)+O\left(\frac{1}{t}\right)+O\left(\frac{1}{\gamma^\pp}\right).
\]
\end{proof}

As a consequence of  Proposition \ref{P:Marden} and Stirling's formula, we note that for
$F(t)=-\tre\left(\eta^\prime/\eta\left(\frac12+i t\right)\right)$ as in Appendix I below, we have
\begin{multline}\label{Eq:newF(t)}
F(t)=-\frac{1}{2}\log(\gamma^\pp/2\pi)-
\sum_{\lambda^\prime}\tre\left(\frac{\rho^\pp-s}{(s-\lambda^\prime)(\rho^\pp-\lambda^\prime)}\right)\\
+O\left(\frac{1}{t}\right)+O\left(\frac{1}{\gamma^\pp}\right).
\end{multline}

Let $\rho^\prime$ denote the zero of $\zeta^\prime(s)$ canonically associated  via Theorem \ref{Thm4} to $\rho^\pp$.  Subtracting \eqref{Eq:FanGe} below from \eqref{Eq:newF(t)} gives
\begin{theorem}\label{Thm:fund}
\begin{equation}\label{Eq:fund}
\tre\left(\frac{1}{\rho^\pp-\rho^\prime}\right)+\sum_{\lambda^\prime\ne\rho^\prime}\tre\left(\frac{1}{\rho^\pp-\lambda^\prime}\right)=\frac{\log(\gamma^\pp/\pi)}{2}+O\left(\frac{1}{\gamma^\pp}\right).
\end{equation}
\end{theorem}
(The $O(1/t)$ term drops out as the rest of the expression is independent of $t$.)
\begin{figure}
\begin{center}
\includegraphics[scale=1.1, viewport=0 0 150 500,clip]{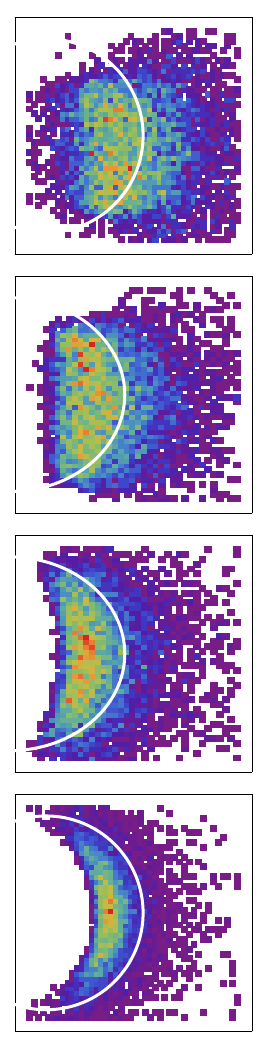}
\caption{}\label{F:10}
\end{center}
\end{figure}

Although we are interested below primarily in application of type 2 zeros, \eqref{Eq:fund} is valid for zeros $\rho^\prime$ of any type.  Theorem \ref{Thm:fund} is a fundamental identity that relates the location of the $\rho^\pp$ associated to $\rho^\prime$ to the location  of all the  $\lambda^\prime\ne\rho^\prime$.  If $\rho^\prime$ is  far from the  critical line, since $\rho^\pp$ still further to the right, most $\lambda^\prime$ contribute a positive term to the sum.  This means that
\[
\tre\left(\frac{1}{\rho^\pp-\rho^\prime}\right)<\frac{\log(\gamma^\pp/\pi)}{2}\approx\frac{\log(\gamma^\prime)}{2}.
\]
Since the level curves $\tre(1/(z-\rho^\prime))=c$ are circles with center $\rho^\prime+1/2c$ and  radius $1/2c$, we expect that $\rho^\prime$ and $\rho^\pp$ lie on opposite sides of a circle of radius larger than $1/\log(\gamma^\prime)$.  

On the other hand, if $\rho^\prime$ is close to the critical line, with  $\rho^\pp$ lying to the right of $\rho^\prime$ we expect there to be $\lambda^\prime$ with $\tre(1/(\rho^\pp-\lambda^\prime)$ positive as well as negative, so there is cancellation in the sum.  So we expect that when $\rho^\prime$ is close to the critical line,
\[
\tre\left(\frac{1}{\rho^\pp-\rho^\prime}\right)\approx\frac{\log(\gamma^\pp/\pi)}{2}\approx\frac{\log(\gamma^\prime)}{2},
\]
and so $\rho^\prime$ and $\rho^\pp$ lie on opposite sides of a circle of radius approximately $1/\log(\gamma^\prime)$.\label{page12}  (One can see both phenomena in Figure \ref{F:moanswer}.)

Figure \ref{F:10} shows data for the 30520 type 2 zeros $\rho^\prime$ near $T=10^6$: for each of the quartiles of $(\beta^\prime-1/2)\log(\gamma^\prime)$, a density histogram of the position of the canonically associated $\rho^{\pp}$ relative to $\rho^\prime+1/\log(\gamma^\prime)$, scaled by $\log(\gamma^\prime)$.  Red denotes the most points in a bin, purple the fewest.  With this normalization the circle is the unit circle, shown in white.  One sees more or less random behavior for the highest quartile (top).  As we go to lower quartiles  for $(\beta^\prime-1/2)\log(\gamma^\prime)$, the zeros $\rho^{\pp}$ are both less likely to be near $\rho^\prime$, and more likely to be inside the circle.

\section{Application of type 2 zeros}\label{S:application}
The horizontal distribution of the zeros of $\zeta^\prime$ has been studied by many authors since Levinson and Montgomery \cite{LandM}.
\begin{figure}
\begin{center}
\includegraphics[scale=1.3, viewport=0 0 310 180,clip]{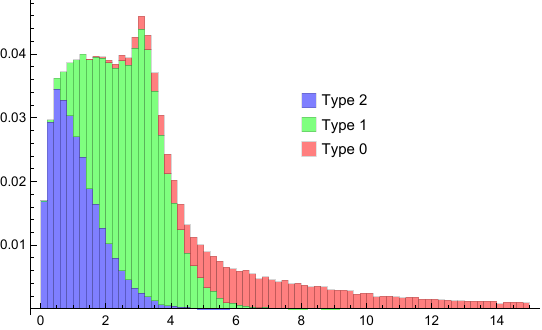}
\caption{$(\beta^\prime-1/2)\log(\gamma^\prime)$ for  $10^5$ zeros near $T=10^6$.}\label{F:3}
\end{center}
\end{figure}
Figure \ref{F:3} shows the histogram of 
$
(\beta^\prime-1/2)\log(\gamma^\prime)
$
for zeros of type 0, 1, and 2 separately.   This is the same data as Figure 5 in \cite{Duenez}, now separated by types.  In \cite{Duenez}, the authors write \emph{\lq\lq We would like to know the underlying cause of the curious \lq second bump\rq in the distribution of zeros of derivatives [of characteristic polynomials of matrices]... In figure 5 we find a similar shape for the distribution of zeros of $\zeta^\prime$.\rq\rq}\ \  Interestingly, the histograms analogous to Figure \ref{F:3} for the three types separately each show only a single peak; it is the interplay between them that causes the second bump.

In \cite{FarmerKi}, Farmer and Ki show that if $\zeta^\prime(s)$ has sufficiently many zeros close to the
critical line, then $\zeta(s)$ has many closely spaced zeros. This gives them a condition on the zeros of
$\zeta^\prime(s)$ which implies a lower bound of the class numbers of imaginary quadratic
fields.
One sees in Figure \ref{F:3} the type 2 zeros closest to the critical line, and the type 0 zeros the furthest.  In fact the median value of $(\beta^\prime-1/2)\log(\gamma^\prime)$ for the type two zeros is $0.98$; the other quartiles are $1.58$ and $0.54$.  Recalling the average value of $\beta^\prime$ is $1/2+\log\log(\gamma^\prime)/\log(\gamma^\prime)$, we can rescale by $\log\log(10^6)$ to see the median for type 2 zeros on this scale is $0.37$.  For comparison, the median for type 0 zeros on this scale is $2.97$.

This strongly motivates the further study of the types, and in particular, the type 2 zeros.  Corresponding to the type 2 zeros of $\zeta^\prime(s)$ in the numerical data, we have 30520 pairs of canonically associated type 2 zeros $1/2+i\gamma^-$, $1/2+i\gamma^+$ of $\zeta(s)$.
Figure \ref{F:4} shows the normalized gap 
\[
(\gamma^+-\gamma^-)\cdot \frac{\log(\gamma^\prime)}{2\pi};
\]
most are less than the average and 26\% are less than half the average.  The colors indicate for the  contributions from the different quartiles of $(\beta^\prime-1/2)\log(\gamma^\prime)$ (green the highest through red the lowest); one sees that when $\rho^\prime$ is closer to the critical line, the normalized gaps are smaller.

\begin{figure}
\begin{center}
\includegraphics[scale=1.3, viewport=0 0 310 160,clip]{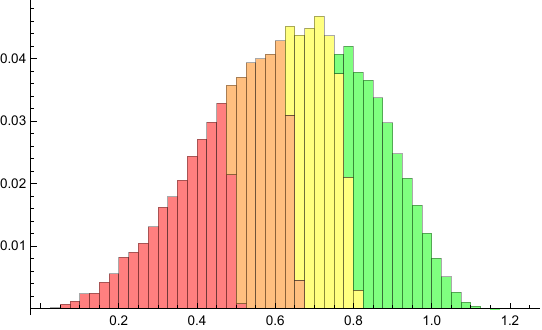}
\caption{Normalized gaps between type 2 zeros near $T=10^6$.}\label{F:4}
\end{center}
\end{figure}

Of interest is the conjecture of Soundararajan \cite{Sou}.  A pair of type 2 zeros of $\zeta(s)$ is canonically identified with a type 2 zero of $\zeta^\prime(s)$: they all lie on the same level curve $\tre(\eta(s))=0$. Since the data indicate that type 2 zeros of $\zeta(s)$ have smaller gaps, and type 2 zeros of $\zeta^\prime(s)$ are closer to the critical line, it is natural to ask if one can show
\begin{equation}\label{Eq:varSound}
\liminf (\beta^\prime -1/2)\log\gamma^\prime = 0 \Rightarrow \liminf (\gamma^+-\gamma^-)\log \gamma^\prime=0
\end{equation}
when the $\liminf$ are both restricted to the subsequence of type 2 zeros. 

We can investigate \eqref{Eq:varSound} via a study of the curvature of the level curve.  With $\eta(s)=u+iv$, the formula for the curvature $\kappa$ of the level curve $u(\sigma,t)=0$ may be found in \cite[\S 3]{Goldman}.  Via the Cauchy-Riemann equations, one sees \cite{Jerrard}
\begin{equation}\label{Eq:kappa1}
\kappa=|\eta^\prime|\cdot \tre(\eta^{\prime\prime}/\eta^{\prime\,2}).
\end{equation}

\begin{figure}
\begin{center}
\includegraphics[scale=1.3, viewport=0 0 310 160,clip]{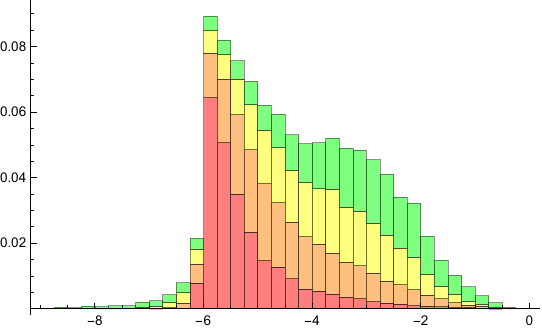}
\caption{Curvatures $\kappa$ for type 2 zeros near $T=10^6$.}\label{F:5}
\end{center}
\end{figure}

Because the level curve is not parametrized, one needs to be careful about orientation.  Equation (\ref{Eq:kappa1}) naturally orients the curve so the outward normal vector is in the direction of increasing real part $u$, which is in the direction of the red curves in Figure \ref{F:1}.  We will refer to this at the \textsc{canonical} orientation. In the numerical data, we have instead introduced a sign to orient the curves locally near a type 2 zero of $\zeta^\prime$ with increasing $t$.  This is possible precisely because the type 2 level curves cross the critical line twice.  We will refer to this as the \textsc{visual} orientation.  With this orientation, all but three of the 30520 data points have negative curvature, that is, curving to the left as $t$ increases.  Figure \ref{F:5} show the (visual) curvature of $u=0$ at $\rho^\prime$ for the type 2 zeros.  Again, the colors indicate the contributions from the different quartiles of $(\beta^\prime-1/2)\log(\gamma^\prime)$ (green the highest through red the lowest); one sees that those $\rho^\prime$ closer to the critical line tend to be  more curved.

(Type 1 curves only have the  canonical orientation, but since the type 0 curves cross the line $\tre(s)=3$ twice, they can also be given a visual orientation.)

To study the curvature, we first need two auxiliary results on the location of $\gamma^\prime$ relative to $\gamma^+$, $\gamma^-$, and on $\theta=\arg(\eta^\prime(\rho^\prime))$.

We claim that $\gamma^\prime$ is very near to $t_0=(\gamma^++\gamma^-)/2$ when either $\beta^\prime-1/2$ is small or $\gamma^+-\gamma^-$ is  small.  In fact, borrowing the notation of  \cite[p.13-14]{Stopple.Lehmer} we introduce $\Delta, t_0$, $Y$, and $\lambda$ defined by\label{page13}
\begin{gather*}
\Delta=\gamma^+-\gamma^-,\qquad \lambda=\log(t_0/2\pi),\\
\rho^{\pm}=1/2+i(t_0\pm \Delta/2),\qquad
\rho^\prime=\beta^\prime+i(t_0+Y)
\end{gather*}
so $\gamma^\prime=t_0+Y$.
We  rescale with 
\[
x=(\beta^\prime-1/2)\lambda,\qquad y=Y\lambda,\qquad\delta=\Delta\lambda/2\pi.
\]
In \cite[p.13-14]{Stopple.Lehmer} we developed series expansions
\begin{gather*}
x(\delta)=\frac{\pi^2}{4}\left(1-\frac{\log(\pi)}{\lambda}\right)\delta^2+O(\delta^4),\\
y(\delta)=\frac{\pi^2}{2\lambda}\left(\frac{\pi}{4}+\sum_{\rho\ne\rho^\pm}\frac{1}{t_0-\gamma}\right)\,\, \delta^2+O(\delta^4).
\end{gather*}
In the  first, we estimate $\delta^2$ in terms of $x$ and plug into the second.  We neglect the sum over $\rho\ne\rho^\pm$, which should show significant cancellation.  (For an individual example of a $\rho^\prime$, there may be an imbalance with more nearby $\rho$ above $\rho^+$ than below $\rho^-$ or vice versa.  But as we will be considering an infinite sequence of type 2 pairs, the only  way the result could fail is if all but finitely many of the pairs showed such an imbalance, which is not plausible.)\ \   Converting back to the original variables we get
\begin{equation}\label{Eq:last}
\gamma^\prime=t_0+O\left((\gamma^+-\gamma^-)^2\right)),\,\,\text{ and }\,\,   \gamma^\prime = t_0+O\left(\frac{\beta^\prime-1/2}{\log(\gamma^\prime)}\right).
\end{equation}

We will next investigate the angles $\theta$, by comparing to the argument of $\eta(1/2+i\gamma^\prime)$.  We observe that the argument of $\eta(1/2+it)$ changes from $-\pi/2$ to $\pi/2$ (or the reverse) as $t$ increases from $\gamma^-$ to $\gamma^+$, so the argument of $\eta(1/2+it_0)$ will be very near to either $0$ or $\pi$, and so will the argument of $\eta(1/2+i\gamma^\prime)$.  Since $\eta(\rho^\prime)=0$, the argument at $\rho^\prime$ is not defined, but there is a limiting value along the horizontal line $\sigma+i\gamma^\prime$ as $\sigma$ approaches $\beta^\prime$ from the left.  Consider the Taylor expansion of $\eta$ at $\rho^\prime$:
\[
\eta(\sigma+i\gamma^\prime)=h\zeta^{\prime\prime}(\rho^\prime)(\sigma-\beta^\prime)+O\left(\sigma-\beta^\prime\right)^2,
\]
so
\[
\frac{\eta(\sigma+i\gamma^\prime)}{\sigma-\beta^\prime}=h\zeta^{\prime\prime}(\rho^\prime)+O\left(\sigma-\beta^\prime\right).
\]
Because $\sigma-\beta^\prime$ is negative and real for $1/2<\sigma<\beta^\prime$, we see that
\begin{equation}\label{Eq:thetaest}
\lim_{\sigma\to\beta^\prime}\arg\left(\eta(\sigma+i\gamma^\prime)\right)=\arg\left(h\zeta^{\prime\prime}(\rho^\prime)\right)+\pi\bmod 2\pi.
\end{equation}
In the ($\Leftarrow$) half of the proof of Theorem \ref{Thm5} below, we show the change in the argument from the critical line to this limiting value is $o(1)$, by integrating $d \arg(\eta(\sigma+i\gamma^\prime))/d\sigma$ from $\sigma=1/2$ to $\sigma=\beta^\prime$.  Note the shift by $\pi$ modulo $2\pi$: when the argument of $\eta(1/2+i\gamma^\prime)$ is very near to $0$ (resp.\ $\pi$),  \eqref{Eq:thetaest} implies $\theta=\arg(h\zeta^{\prime\prime}(\rho^\prime))$ is near $\pi$ (resp.\ $0$) modulo $2\pi$.   

The data are shown in Figure \ref{F:6}.  As before, the colors indicate the contributions from the various quartiles of $(\beta^\prime-1/2)\log(\gamma^\prime)$; one sees that those $\rho^\prime$ closer to the critical line tend to have $\theta$ more strongly cluster around $0$ and $\pi$.  (When $\theta$ is near $0$, the canonical orientation is the same as the visual orientation; when $\theta$ is near $\pi$, the canonical orientation is the opposite of the visual orientation.)

\begin{figure}
\begin{center}
\includegraphics[scale=1.4, viewport=0 0 380 205,clip]{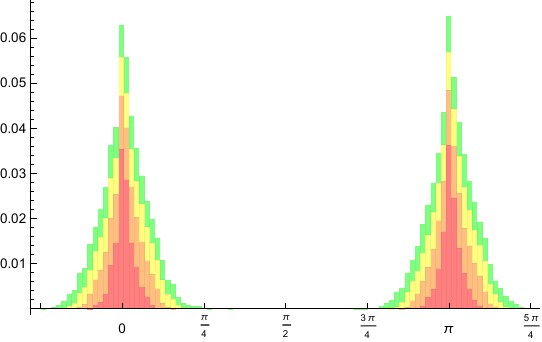}
\caption{$\arg(\eta^\prime(\rho^\prime))$ for type 2 zeros near $T=10.^6$.}\label{F:6}
\end{center}
\end{figure}

We can now investigate more fully the  significance of the curvature.
The osculating circle for the curve $\tre(\eta)=0$ at $\rho^\prime$ has radius $R=1/|\kappa|$.  Based on our study of the angles $\theta$, we will assume that $\rho^\prime$ is the leftmost point on the osculating circle.  Then elementary geometry shows the length of the chord this circle cuts from the critical line is
\[
2\left((\beta^\prime-1/2)(2/|\kappa|+1/2-\beta^\prime)\right)^{1/2}.
\]

The inequality 
\[
2\left((\beta^\prime-1/2)(2/|\kappa|+1/2-\beta^\prime)\right)^{1/2}<2^{3/2}\left(\frac{\beta^\prime-1/2}{|\kappa|}\right)^{1/2}.
\]
shows that, along a subsequence with 
\[
(\beta^\prime-1/2)\log(\gamma^\prime)\to 0,
\]
any lower bound of the form
\begin{equation}\label{Eq:needestimate}
\log(\gamma^\prime)\ll |\kappa|
\end{equation}
will force the length of the chord, multiplied by $\log(\gamma^\prime)$, to tend to $0$.  Since the error between the level curve and the osculating circle is of cubic order, we see that (\ref{Eq:needestimate}) will force
\[
(\gamma^+-\gamma^-)\log(\gamma^\prime)\to 0
\]
as well, which will prove \eqref{Eq:varSound}.  

The expression (\ref{Eq:kappa1}) for $\kappa$ simplifies when evaluated at a zero $\rho^\prime$ of $\zeta^\prime(s)$.  Write $h(s)=\pi^{-s/2}\Gamma(s/2)$, and suppressing the $\rho^\prime$, we have
\[
\eta^\prime=h\zeta^{\prime\prime},\qquad \eta^{\prime\prime}=2h^\prime\zeta^{\prime\prime}+h\zeta^{\prime\prime\prime}.
\]
Recall $\theta$ denote $\arg(\eta^\prime)$, so
\[
\frac{|\eta^\prime|}{\eta^\prime}=\exp(-i\theta),
\]
and the curvature at $\rho^\prime$ reduces to
\begin{equation}\label{Eq:kappa2}
\kappa=\tre\left(\exp(-i\theta)\left(\frac{2h^\prime}{h}+\frac{\zeta^{\prime\prime\prime}}{\zeta^{\prime\prime}}\right)\right).
\end{equation}

As we saw above, $\theta$ is very near $0$ or $\pi$, so $\exp(-i\theta)$ is very near $\pm1$, and does not contribute significantly to $|k|$. 

The next proposition relates the remaining parameters determining the curvature of the level curve at one zero $\rho^\prime$ of $\zeta^\prime(s)$ to the locations of all the other zeros $\lambda^\prime\ne\rho^\prime$ of $\zeta^\prime(s)$.

\begin{proposition} With $\rho^\pp$ a zero of $\zeta^\pp(s)$ as above, let $\rho^\prime=\beta^\prime+i\gamma^\prime$ be the canonically associated zero of $\zeta^\prime(s)$.  Then
\begin{multline}\label{Eq:Prop13}
\tre\left(2\frac{h^\prime}{h}(\rho^\prime)+\frac{\zeta^{\prime\prime\prime}}{\zeta^{\pp}}(\rho^\prime)\right)=\\
-\log(2)-2\sum_{\lambda^\prime\ne\rho^\prime}\tre\left(\frac{1}{\lambda^\prime-(1/2+i\gamma^\prime)}\right)\\
+O\left(\beta^\prime-1/2\right)+O\left(\frac{1}{\gamma^\prime}\right)+O\left(\frac{1}{\gamma^\pp}\right).
\end{multline}
\end{proposition}
\begin{proof}
We evaluate $\zeta^\pp/\zeta^\prime(s)$ at $s=1/2+i\gamma^\prime$ using Proposition \ref{P:Marden}, and also via a Laurent expansion at $\rho^\prime$:
\[
\frac{\zeta^\pp}{\zeta^\prime}(1/2+i\gamma^\prime)=\frac{1}{1/2-\beta^\prime}+\frac{\zeta^{\prime\prime\prime}}{2\zeta^{\pp}}(\rho^\prime)+O\left(\beta^\prime-1/2\right).
\]
Stirling's formula applied to $2h^\prime/h(\rho^\prime)$ and taking real parts shows that
\begin{multline}
\tre\left(2\frac{h^\prime}{h}(\rho^\prime)+\frac{\zeta^{\prime\prime\prime}}{\zeta^{\pp}}(\rho^\prime)\right)=
\log(\gamma^\pp/\pi)-\log(2)
+\\
2\sum_{\lambda^\prime\ne\rho^\prime}\tre\left(\frac{\rho^\pp-(1/2+i\gamma^\prime)}{((1/2+i\gamma^\prime)-\lambda^\prime)(\rho^\pp-\lambda^\prime)}\right)-\tre\left(\frac{2}{\rho^\pp-\rho^\prime}\right)\\
+O\left(\beta^\prime-1/2\right)+O\left(\frac{1}{\gamma^\prime}\right)+O\left(\frac{1}{\gamma^\pp}\right).
\end{multline}
We then use \eqref{Eq:fund} to replace
\[
\log(\gamma^\pp/\pi)-\tre\left(\frac{2}{\rho^\pp-\rho^\prime}\right)
\quad
\text { with  }
\quad
2\sum_{\lambda^\prime\ne\rho^\prime}\tre\left(\frac{1}{\rho^\pp-\lambda^\prime}\right).
\]
\end{proof}

We now come (at last) to the main result.
\begin{theorem}\label{Thm5}  For type 2 zeros,
\[
\liminf (\beta^\prime -1/2)\log\gamma^\prime = 0 \Leftrightarrow \liminf (\gamma^+-\gamma^-)\log \gamma^\prime=0
\]
\end{theorem}
\begin{proof} ($ \Rightarrow$)
Consider a sequence of type 2 triples $\rho^+$, $\rho^\prime$, $\rho^-$, with $(\beta^\prime-1/2)\log(\gamma^\prime)\to 0$.
By (\ref{Eq:needestimate}), (\ref{Eq:kappa2}), and (\ref{Eq:Prop13}), it suffices to show
\begin{equation}
\log(\gamma^\prime)\ll \sum_{\lambda^\prime\ne\rho^\prime}\tre\left(\frac{1}{\lambda^\prime-(1/2+i\gamma^\prime)}\right).
\end{equation}
Let $d^+=\gamma^+-\gamma^\prime$, $d^-=\gamma^\prime-\gamma^-$, $\Delta=d^++d^-=\gamma^+-\gamma^-$.  If $\Delta \log(\gamma^\prime)\to 0$ we are already done, so we may as well assume there is some $\epsilon>0$ so that
\[
\Delta \log(\gamma^\prime)>\epsilon.
\]
Based on \eqref{Eq:last}, we may as well assume $d^+, d^->\Delta/3$.
Let
\begin{align*}
G^+(t)=&\sum_{\substack{\lambda^\prime\\|\lambda^\prime-\rho^+|>d^+}}\tre\left(\frac{1}{\lambda^\prime-(1/2+it)}\right)\\
G^-(t)=&\sum_{\substack{\lambda^\prime\\|\lambda^\prime-\rho^-|>d^-}}\tre\left(\frac{1}{\lambda^\prime-(1/2+it)}\right)\\
G(t)=&\sum_{\lambda^\prime}\tre\left(\frac{1}{\lambda^\prime-(1/2+it)}\right)
\end{align*}
When  $|\lambda^\prime-\rho^+|>d^+$, we have
\[
|\lambda^\prime-(1/2+i\gamma^\prime)|\le|\lambda^\prime-\rho^+|+|\rho^+-(1/2+i\gamma^\prime)|<2|\lambda^\prime-\rho^+|,
\]
and so
\[
G^+(\gamma^+)/4 < G^+(\gamma^\prime).
\]
Similarly
\[
G^-(\gamma^-)/4< G^-(\gamma^\prime),
\]
so
\begin{multline*}
G^+(\gamma^+)/4+G^-(\gamma^-)/4< G^+(\gamma^\prime)  + G^-(\gamma^\prime)\\
\le 2\sum_{\lambda^\prime\ne\rho^\prime}\tre\left(\frac{1}{\lambda^\prime-(1/2+i\gamma^\prime)}\right).
\end{multline*}
Now if $|\lambda^\prime-\rho^+|\le d^+$ and $\lambda^\prime\ne\rho^\prime$, then $\tim(\lambda^\prime)>\tim(\rho^+)$, and so
\[
|\lambda^\prime-\rho^+|<|\lambda^\prime-\rho^-|,\quad\text{ and }\quad\tre\left(\frac{1}{\lambda^\prime-\rho^-}\right)<\tre\left(\frac{1}{\lambda^\prime-\rho^+}\right).
\]
Similarly, if $|\lambda^\prime-\rho^-|\le d^-$ and $\lambda^\prime\ne\rho^\prime$, then
\[
\tre\left(\frac{1}{\lambda^\prime-\rho^+}\right)<\tre\left(\frac{1}{\lambda^\prime-\rho^-}\right).
\]
Thus
\begin{multline*}
\left(G(\gamma^+)-\tre\left(\frac{1}{\rho^\prime-\rho^+}\right)\right)/8+\left(G(\gamma^-)-\tre\left(\frac{1}{\rho^\prime-\rho^-}\right)\right)/8
\\
<G^+(\gamma^+)/4+G^-(\gamma^-)/4.\
\end{multline*}
We have $(G(\gamma^+)+G(\gamma^-))/8\sim\log(\gamma^\prime)/8$ via \eqref{Eq:FanGe}, \eqref{Eq:ZLemma3}.  Next
\begin{multline*}
-\tre\left(\frac{1}{\rho^\prime-\rho^+}\right)>-\frac{\beta^\prime-1/2}{(\beta^\prime-1/2)^2+\Delta^2/9}=\\
-\frac{(\beta^\prime-1/2)\log(\gamma^\prime)}{((\beta^\prime-1/2)\log(\gamma^\prime))^2+(\Delta\log(\gamma^\prime))^2/9}\cdot\log(\gamma^\prime)\\
>-\frac{(\beta^\prime-1/2)\log(\gamma^\prime)}{((\beta^\prime-1/2)\log(\gamma^\prime))^2+\epsilon^2/9}\cdot\log(\gamma^\prime).\\
\end{multline*}
With numerator
\[
(\beta^\prime-1/2)\log(\gamma^\prime)\to 0
\]
and denominator
\[
((\beta^\prime-1/2)\log(\gamma^\prime))^2+\epsilon^2/9\to \epsilon^2/9, 
\]
we see that eventually
\[
-\tre\left(\frac{1}{\rho^\prime-\rho^+}\right)>-\delta\cdot\log(\gamma^\prime)
\]
for any  $\delta>0$, in particular for $\delta=1/4$ which suffices.  The term $\tre(1/(\rho^\prime-\rho^-))$  is handled similarly.

($ \Leftarrow$)
Consider a sequence of type 2 pairs $\rho^+$, $\rho^-$, with 
\[
(\gamma^+-\gamma^-)\log((\gamma^++\gamma^-)/2))\to 0.
\]
We know from \cite[Theorem  2]{FanGe2}, that 
for any
$v<0.4$ the following holds: For all sufficiently large $\gamma^+$, $\gamma^-$ with $\Delta< v/\log t_0$, (notation as in page \pageref{page13}) the box
$$
\{s=\sigma+it: \frac{1}{2}<\sigma<\frac{1}{2}+\frac{v^2}{4\log t_0},
\gamma^-\le t\le \gamma^+\}
$$
contains exactly one zero $\rho^\prime$  of $\zeta^\prime(s)$.  Because \cite{Zhang} proved this direction of Soundararajan's conjecture, the desired implication
is simpler, but not trivial.  We just need to confirm for this type 2 pair $\gamma^-$, $\gamma^+$, that  $\rho^\prime$ as above is the canonically associated type 2 zero, and not some stray type 1 or  type 0.  

As we saw in \eqref{Eq:last}, when $\rho^\prime$ is near the critical line, $1/2+i \gamma^\prime$ is very near the midpoint of $1/2+i\gamma^\pm$ and so the argument of $\eta(1/2+i\gamma^\prime)$ will be very near to either $0$ or $\pi$.  Now consider the horizontal line segment joining $1/2+i\gamma^\prime$ to $\beta^\prime+i\gamma^\prime$.  If $\rho^\prime$ is not the type 2 zero on the contour, note that  $\rho^\prime$ can not lie inside (i.e.\ to the left of) the contour $\tre(\eta)=0$ connecting $1/2+i\gamma^-$ and $1/2+i\gamma^+$: the geometry of the level curves does not make sense.  Thus the horizontal line segment would have to cross this contour, and the argument of $\eta(s)$ would have to change by more than $\pi/2$ between $1/2+i\gamma^\prime$ and $\beta^\prime+i\gamma^\prime$.  We recover the change in argument by integrating
\[
\frac{d \arg(\eta(\sigma+i\gamma^\prime))}{d\sigma}
 = \tim\frac{\eta^\prime}{\eta}\left(\sigma+i \gamma^\prime\right)
\]
from $\sigma=1/2$ to $\sigma=\beta^\prime$.

We write
\[
\frac{\eta^\prime}{\eta}\left(\sigma+i \gamma^\prime\right)=\frac{h^\prime}{h}\left(\sigma+i \gamma^\prime\right)+\frac{\zeta^\pp}{\zeta^\prime}\left(\sigma+i \gamma^\prime\right),
\]
and via Stirling's formula $\tim\  h^\prime/h\left(\sigma+i \gamma^\prime\right)=\pi/4+O\left(1/\gamma^\prime\right)$.  Let $\rho^\pp$ be the zero of $\zeta^\pp(s)$ canonically associated to $\rho^\prime$.  With $s=\sigma+i\gamma^\prime$ we use \eqref{Eq:Marden} for $\zeta^\pp/\zeta^\prime(s)$ to see that
\begin{multline}\label{Eq:darg}
\frac{d \arg(\eta(\sigma+i\gamma^\prime))}{d\sigma}=\frac{\pi}{4}+\frac{\gamma^\prime-\gamma^\pp}{|\rho^\prime-\rho^\pp|^2}+\\
\sum_{\lambda^\prime\ne \rho^\prime}\tim\left(\frac{\rho^\pp-s}{(s-\lambda^\prime)(\rho^\pp-\lambda^\prime)}\right)
+O\left(\frac{1}{\gamma^\prime}\right)+O\left(\frac{1}{\gamma^\pp}\right).
\end{multline}
It will suffice to show the integrand \eqref{Eq:darg} multiplied by $\beta^\prime-1/2$ is $o(1)$.  We can eliminate the $\pi/4$ term and the $O$ terms, and based on the cancellation in the sum over $\lambda^\prime\ne\rho^\prime$, that term is also negligible. So we bound the integral by
\[
(\beta^\prime-1/2)\frac{\gamma^\prime-\gamma^\pp}{|\rho^\prime-\rho^\pp|^2}
\]
From the discussion following Theorem \ref{Thm:fund}, we expect  when $\beta^\prime-1/2$ is small, that $|\rho^\prime-\rho^\pp|\sim 2/\log(\gamma^\prime)$.    Since $\beta^\prime-1/2$ is $o(1/\log(\gamma^\prime))$, it suffices that $\gamma^\prime-\gamma^\pp$ is $O(1/\log(\gamma^\prime))$.  This is certainly true since this is the average vertical spacing of the zeros of both $\zeta^\prime(s)$ and $\zeta^\pp(s)$.
\end{proof}

\section{Appendix I: Improved Zhang-Ge Lemma}\label{S:IZL}
Let
\[
F(t)\overset{\text{def.}}=-\tre\frac{\eta^\prime}{\eta}\left(\frac12+i t\right).
\]
Let $\log\eta(s)$ be any choice of the branch of the logarithm in an open set which contains the critical line but no zeros of $\zeta^\prime$.  By the Cauchy-Riemann equations, 
\[
F(t)=-\frac{d \arg(\eta(1/2+it))}{dt}.
\]
\begin{lemma}
For $t>3$, $F(t)>0$.
\end{lemma}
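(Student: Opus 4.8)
The plan is to reduce this pointwise inequality to a statement about the winding of $\zeta'/\zeta$ along the critical line, and then to control that winding by explicit estimates. Writing $H(s)=\pi^{-s/2}\Gamma(s/2)$ so that $\eta=H\zeta'$, I first record that
\[
\frac{\eta'}{\eta}(s)=\frac{H'}{H}(s)+\frac{\zeta''}{\zeta'}(s),\qquad \frac{H'}{H}(s)=-\tfrac12\log\pi+\tfrac12\frac{\Gamma'}{\Gamma}(s/2),
\]
and that on the line $\tre\frac{H'}{H}(1/2+it)=\vartheta'(t)$, the derivative of the Riemann--Siegel theta function. The functional equation $\xi(s)=\xi(1-s)$ makes $\xi'/\xi$ purely imaginary on $\sigma=1/2$, which combined with the factorization $\xi=\tfrac12 s(s-1)H\zeta$ yields the exact identity $\tre\frac{\zeta'}{\zeta}(1/2+it)=-\vartheta'(t)$. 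Substituting $\frac{\zeta''}{\zeta'}=\frac{\zeta'}{\zeta}+\bigl(\log\frac{\zeta'}{\zeta}\bigr)'$ and taking real parts, the two theta terms cancel and I obtain the clean reformulation
\[
F(t)=-\tre\frac{\eta'}{\eta}\Bigl(\tfrac12+it\Bigr)=-\tre\Bigl(\log\tfrac{\zeta'}{\zeta}\Bigr)'\Bigl(\tfrac12+it\Bigr).
\]
Thus it suffices to show that the curve $w(t)=\frac{\zeta'}{\zeta}(1/2+it)$ rotates clockwise for $t>7$.

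Next I would set up the geometry. By the identity above $\tre w(t)=-\vartheta'(t)$, which is strictly negative for $t>7$ (this range is essentially where $\vartheta'$ is positive, matching the hypothesis of Lemma 1), so $w$ stays in the open left half-plane. Differentiating $\tre\frac{\zeta'}{\zeta}(1/2+it)=-\vartheta'(t)$ gives $\tim(\zeta'/\zeta)'(1/2+it)=\vartheta''(t)$, while the Hadamard expansion of $\xi$ gives $\frac{d}{dt}\tim w(t)=\tre(\zeta'/\zeta)'(1/2+it)=\sum_\gamma (t-\gamma)^{-2}+O(t^{-2})>0$; hence $\tim w$ is strictly increasing and $\vartheta''(t)>0$. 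Writing $V=\tim w$ and $\dot V=\frac{d}{dt}V$, a direct computation gives
\[
\frac{d}{dt}\arg w=\frac{\vartheta''(t)\,V-\vartheta'(t)\,\dot V}{|w|^2},
\]
so the Lemma is equivalent to the inequality $\vartheta''(t)\,V<\vartheta'(t)\,\dot V$ for all $t>7$.

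Since $\vartheta'$, $\vartheta''$ and $\dot V$ are all positive, this inequality is automatic wherever $V=\tim w\le 0$, so the entire difficulty is confined to the intervals where $V>0$, i.e. just below each zero $\gamma$. There the nearest zero dominates both sums, $V\sim(\gamma-t)^{-1}$ and $\dot V\sim(\gamma-t)^{-2}\sim V^2$, so $\vartheta'\dot V$ overwhelms $\vartheta''V$; in the bulk $\dot V$ is of size $(\log t)^2$ while $\vartheta''V=O(V/t)$ is tiny. The main obstacle is turning these heuristics into a bound valid \emph{uniformly} for every $t>7$ with explicit constants: one must estimate $\zeta'/\zeta$ and $(\zeta'/\zeta)'$ near the critical line through their partial-fraction expansions, estimate $\vartheta'$ and $\vartheta''$ via Stirling for the digamma function $\Gamma'/\Gamma$ and its derivative, and keep track of every implied constant — this is exactly the explicit form of Zhang's Lemmas 2 and 4. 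The delicate regime is a possible large gap between consecutive $\gamma$, where $\dot V$ is smallest relative to $\vartheta'$; an explicit upper bound controlling the clustering of the zeros (equivalently, in the zero-counting estimate) is what closes the argument, together with a direct numerical check near the lower endpoint $t=7$.
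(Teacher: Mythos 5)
Your reformulation is correct as far as it goes: the identities $\tre\frac{H'}{H}(\tfrac12+it)=\vartheta'(t)$ and $\tre\frac{\zeta'}{\zeta}(\tfrac12+it)=-\vartheta'(t)$ do hold, the theta terms cancel, and with $w=\frac{\zeta'}{\zeta}(\tfrac12+it)$, $V=\tim w$, one indeed gets
\[
F(t)=\frac{\vartheta'(t)\,\dot V(t)-\vartheta''(t)\,V(t)}{\vartheta'(t)^2+V(t)^2},
\]
so the case $V\le 0$ is immediate. But after this reduction the lemma \emph{is} the inequality $\vartheta''V<\vartheta'\dot V$ on the set where $V>0$, and that is exactly where your argument stops being a proof and becomes a heuristic: ``the nearest zero dominates,'' ``an explicit upper bound controlling the clustering of the zeros \dots is what closes the argument'' are statements of what remains to be proven, not proofs. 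The difficulty is not cosmetic. Near the lower endpoint one has $\vartheta'(7)\approx 0.054$ while $\vartheta''(7)\approx 0.071$, so $\vartheta''/\vartheta'>1$ there and the needed inequality $\dot V>(\vartheta''/\vartheta')V$ has essentially no room to spare; moreover $V>0$ does occur in $(7,\gamma_1)$ since $V\to+\infty$ as $t\to\gamma_1^-$. Carrying your plan out would require an explicit truncated formula for $\tim\frac{\zeta'}{\zeta}(\tfrac12+it)$ in terms of nearby zeros, explicit zero-counting in unit intervals, explicit lower bounds for $\dot V$ inside long gaps, and numerics in the low range --- none of which is executed. So the proposal is a correct reduction plus a sketch, with the core of the lemma left open.

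The idea you are missing --- and the one the paper uses --- is to expand over the zeros of $\zeta'$ rather than the zeros of $\zeta$. Writing $\eta=h\,\zeta'$ and using Zhang's Hadamard-type expansion of $\zeta''/\zeta'$ over the zeros $\rho'=\beta'+i\gamma'$ of $\zeta'$, the Riemann Hypothesis (via Speiser's theorem) forces $\beta'>1/2$ for every complex zero, so on the critical line \emph{every} zero term contributes positively to $F$:
\[
-\tre\frac{1}{s-\rho'}=\frac{\beta'-1/2}{|s-\rho'|^2}>0,\qquad s=\tfrac12+it .
\]
The sign-indefinite case analysis that blocks your route simply disappears: $F(t)$ is bounded below by a positive numerical constant ($>0.34$, obtained by explicitly evaluating the constant in Zhang's formula, the sums $\sum\tre(1/\rho')$ over complex and real zeros of $\zeta'$) plus error terms bounded explicitly by $\frac{2}{1+4t^2}-\frac1t$, and positivity for $t>7$ follows. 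In short: your expansion places the singular set (the zeros of $\zeta$) on the very line you work on, which is why $V$ changes sign and creates a hard case; the paper's expansion places the singular set (the zeros of $\zeta'$) strictly to the right of that line, which is what makes every zero term help rather than hurt, leaving only finite computation.
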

\begin{proof}
In \cite[(2.9)]{Zhang}, Zhang deduces
\[
F(t)=-\sum_{\lambda^\prime}\tre\left(\frac{1}{1/2+it-\lambda^\prime}\right)+O\left(1\right)
\]
(where $\lambda^\prime$ denotes complex zeros of $\zeta^\prime(s)$.)\ \ 
In \cite[Lemma 7]{FanGe2}, Ge improves this to get
\begin{equation}\label{Eq:FanGe}
F(t)=-\sum_{\lambda^\prime}\tre\left(\frac{1}{1/2+it-\lambda^\prime}\right)+\log(2)/2+O\left(1/t\right).
\end{equation}
In fact the $O(1/t)$ error term is explicitly (writing $s=1/2+it$)
\[
-\tre\left(\frac{1}{s}-\frac{2}{s-1}\right)-\tre\sum_{n=1}^\infty\left(\frac{1}{s+a_n}-\frac{1}{s+2n}\right).
\]
Here
\[
-(2n+2)<-a_n<-2n
\]
is the unique real zero on $\zeta^\prime(s)$ in the interval.  For $s=1/2+it$,
\[
-\tre\left(\frac{1}{s}+\frac{2}{s-1}\right)=\frac{2}{1+4t^2}.
\]
We claim that the tail of the series,
\[
-\tre\sum_{2n>t}^\infty\left(\frac{1}{s+a_n}-\frac{1}{s+2n}\right)>0,
\]
as this sum is
\[
\sum_{2n>t}\tre\left(\frac{a_n-2n}{(s+a_n)(s+2n)}\right)=
\sum_{2n>t}(a_n-2n)\frac{\tre\left((s+a_n)(s+2n)\right)}{|s+a_n|^2|s+2n|^2}.
\]
With $2n<a_n<2n+2$, and $2n>t$, every term is positive.
Meanwhile
\[
-\tre\sum_{2n<t}^\infty\left(\frac{1}{s+a_n}-\frac{1}{s+2n}\right)=
\sum_{2n<t}\frac{-a_n-1/2}{|s+a_n|^2}+\frac{1/2+2n}{|s+2n|^2}.
\]
From $|s+a_n|^2>|s+2n|^2$ we deduce
\[
\frac{-1/2-a_n}{|s+a_n|^2}>\frac{-1/2-a_n}{|s+2n|^2},
\]
so this sum is bounded below by
\[
\sum_{2n<t}\frac{-a_n+2n}{|s+2n|^2}>-2\sum_{2n<t}\frac{1}{(2n+1/2)^2+t^2}.
\]
The sum has $t/2$ terms, each less than $1/t^2$ so this sum is bounded below by $-1/t$.  We conclude that
\[
F(t)>-\sum_{\beta^\prime>1/2}\tre\frac{1}{1/2+it-\lambda^\prime}+\log(2)/2+\frac{2}{1+4t^2}-\frac{1}{t},
\]
and for $t>3$,
\[
\log(2)/2+\frac{2}{1+4t^2}-\frac{1}{t}>0.
\]
\end{proof}
By Stirling's formula we have that for $\rho=1/2+i\gamma$ a zero of $\zeta(s)$, Zhang's Lemma 3 is, more explicitly,
\begin{equation}\label{Eq:ZLemma3}
F(\gamma)=\frac12\log\left(\frac{\gamma}{2\pi}\right)+O\left(\frac1t\right).
\end{equation}
Zhang's Lemma 4 becomes
\begin{lemma} For $n\ge 1$,
\[
\int_{\gamma_n}^{\gamma_{n+1}}F(t)\, dt=\pi.
\]
\end{lemma}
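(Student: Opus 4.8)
The plan is to read the integral as a net change of argument and then pin that change to exactly $\pi$ using Lemma~1 together with the positivity of $F$ just established. Since $F(t)=-\,d\arg(\eta(1/2+it))/dt$, the fundamental theorem of calculus gives, for any continuous branch $\theta(t)=\arg(\eta(1/2+it))$ along the line,
\[
\int_{\gamma_n}^{\gamma_{n+1}}F(t)\,dt=\theta(\gamma_n)-\theta(\gamma_{n+1}).
\]
Such a branch exists because $\eta$ is zero-free on the critical line for $t>7$: the factor $\pi^{-s/2}$ and $\Gamma(s/2)$ never vanish, and under RH one has $\zeta'(1/2+it)\neq0$ there. So everything reduces to showing the right-hand side equals $\pi$.

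First I would locate $\theta$ at the endpoints. By Lemma~1, for $t>7$ the real part $\tre(\eta(1/2+it))$ vanishes exactly at the heights $t=\gamma_m$ of the Riemann zeros. Since $\tre(\eta)=|\eta|\cos\theta$ with $|\eta|\neq0$ on the line, this says $\theta(t)\in\tfrac{\pi}{2}+\pi\mathbb Z$ precisely when $t$ equals some $\gamma_m$. In particular $\theta(\gamma_n)$ and $\theta(\gamma_{n+1})$ both lie in the lattice $\tfrac{\pi}{2}+\pi\mathbb Z$, while $\theta(t)$ avoids that lattice for every $t\in(\gamma_n,\gamma_{n+1})$.

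Next I would invoke monotonicity. All heights here satisfy $t>7$ because $\gamma_1>7$, so the preceding lemma gives $F>0$ and hence $\theta$ is strictly decreasing. Thus $\theta(\gamma_n)-\theta(\gamma_{n+1})$ is a positive element of $\pi\mathbb Z$; strict decrease excludes $0$, and the intermediate value theorem excludes $2\pi$ or larger, since a drop of at least $2\pi$ would force $\theta$ to pass through an intermediate point of $\tfrac{\pi}{2}+\pi\mathbb Z$ inside the open interval, contradicting the previous paragraph. Hence the drop is exactly $\pi$, as claimed.

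I do not expect a genuine obstacle here: with $F>0$ and Lemma~1 both in hand, the argument is elementary real analysis. The only step requiring a little care is the equivalence between the vanishing of $\tre(\eta(1/2+it))$ and membership of $\theta(t)$ in $\tfrac{\pi}{2}+\pi\mathbb Z$, which rests on $\eta$ being nonzero on the line so that the factor $|\eta|$ can be divided out; once that is noted, the decrease-by-exactly-$\pi$ conclusion is forced by monotonicity and the absence of intermediate crossings.
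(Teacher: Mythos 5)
Your proof is correct and follows essentially the route the paper intends: the paper states this lemma without a detailed written proof, treating it as the immediate consequence of Lemma~1 (the real part of $\eta(1/2+it)$ vanishes exactly at the $\gamma_m$ for $t>7$) together with the Improved Zhang Lemma ($F>0$, so the argument of $\eta$ decreases strictly along the critical line), which is precisely the monotonicity-plus-intermediate-value argument you spell out. Your filled-in details (continuity of the branch of the argument, endpoints lying in $\tfrac{\pi}{2}+\pi\mathbb{Z}$, and exclusion of a drop of $2\pi$ or more) are exactly what the paper's assertion that the argument ``decreases by exactly $\pi$ between consecutive zeros'' relies on.
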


\section{Appendix II: Algorithms}\label{S:numerics}
In \emph{Mathematica} we computed the types of 108043 zeros of $\zeta^\prime$ in the range $10^6\le t\le 10^6+6\cdot 10^4$, previously located by Farmer \cite{Duenez}, and the types of the corresponding zeros $\rho_k$ of $\zeta$, $1747145\le k\le 1861805$.  

The algorithm followed the level curve $\tre(\eta)=0$ (labeled with the sign of $\tim(\eta)$) from each zero of $\zeta$ until it terminated in a zero of $\zeta^\prime$.  This was done with a small square (starting with size $1/5$ the distance to the nearest zero of $\zeta$), oriented in the direction of the unit tangent to the level curve, with the zero of $\zeta$ at the midpoint of a side.  A sign change of $\tre(\eta)$ at the two ends of a side is a sufficient, but not necessary condition for the level curve to cross a side of the square.  So instead we evaluated $\eta$ at the four corners and four midpoints, and used Lagrange interpolation with a quadratic polynomial to look for crossings on each of the sides.  The next square was again oriented in the direction of the unit tangent, with the approximation to the previous crossing at the midpoint of a side.  Whenever more than one labeled contour crossed a side, we halved the size of the square and recomputed.    When the labeled contour did not exit the square, we counted zeros of $\eta$ in the square via the Argument Principle.  If more than one zero of $\eta$ was in the square, again we halved the size of the square and recomputed.  Finally a numerical value for the zero of $\zeta^\prime$ in the square was determined with \emph{Mathematica}'s \texttt{FindRoot}.

Since the contour $\tre(\eta)=0$ is invariant when $\eta$ is rescaled by a positive real number, it was convenient to compute only the argument of the Gamma factor in the definition of $\eta$, via Stirling's formula, avoiding the numerical challenges of the exponential decay in the modulus.

Derivatives of $\zeta$ high in the critical strip are expensive to compute, and for all but the final step above, simple numerical approximations \cite[\S 5.7]{recipes} were sufficient.  Only  \texttt{FindRoot}  above used \emph{Mathematica}'s internal algorithms to compute $\zeta^\prime$.

After determining the zero of $\zeta^\prime$ at the termination of each of the labeled contours beginning at each of the zeros of $\zeta$, we knew the types of all the zeros involved.  As a  check, the algorithm was run on the zeros up to $T=1000$, and the results agreed with the classification done by visual inspection.

\subsubsection*{Acknowledgments}  We would like to thank both Rick Farr for sharing his computation of zeros of $\zeta^\prime(s)$ in the range $t<1000$, and David Farmer for sharing his computations in the range $10^6\le t\le 10^6+6\cdot 10^4$.  Thanks also to Fan Ge for suggesting a simpler argument with less explicit computation for the Improved Zhang Lemma, and helpful comments on the manuscript.  Thanks to UCSB Chancellor Henry Yang for providing summer research leave while the author was Dean of Undergraduate Education.

\end{document}